% This is samplepaper.tex, a sample chapter demonstrating the
% LLNCS macro package for Springer Computer Science proceedings;
% Version 2.20 of 2017/10/04
%
\documentclass[runningheads]{llncs}
\usepackage{amsmath}
\usepackage{amssymb}
\usepackage{graphicx}
\usepackage{proof}
\usepackage{url}
% Used for displaying a sample figure. If possible, figure files should
% be included in EPS format.
%
% If you use the hyperref package, please uncomment the following line
% to display URLs in blue roman font according to Springer's eBook style:
% \renewcommand\UrlFont{\color{blue}\rmfamily}

\DeclareMathOperator{\BL}{\mathbf{BCL}}
\DeclareMathOperator{\B}{\mathbf{B}}
\DeclareMathOperator{\LO}{\mathbf{L}}
\DeclareMathOperator{\C}{\mathbf{C}}
\DeclareMathOperator{\D}{\mathcal{D}}
\DeclareMathOperator{\RAA}{\textup{RAA}}

\DeclareMathOperator{\tonk}{\mathbf{tonk}}
\DeclareMathOperator{\PSet}{\mathcal{P}}
\newcommand{\yoriyuki}[1]{}
\allowdisplaybreaks

\begin{document}
\title{Notion of validity for the bilateral classical logic}
%
%\titlerunning{Abbreviated paper title}
% If the paper title is too long for the running head, you can set
% an abbreviated paper title here
%
\author{Ukyo Suzuki\inst{1} \and
Yoriyuki Yamagata\inst{2}\orcidID{0000-0003-2096-677X}}
\authorrunning{U. Suzuki and Y. Yamagata}
% First names are abbreviated in the running head.
% If there are more than two authors, 'et al.' is used.
%
\institute{
Pico lab\\
\and
National Insistute of Advanced Industrial Science and Technology (AIST), Ikeda, Japan\\
\email{yoriyuki.yamagata@aist.go.jp}
}
\maketitle              % typeset the header of the contribution
\begin{abstract}
  This paper explores proof-theoretic semantics, a formal approach to inferential semantics. It derives sentence meaning from formalized proofs, building upon Gentzen and Prawitz's work. The study addresses challenges in understanding how proofs contribute to sentence meaning.
  
  In this setting, classical logic poses "Dummett's challenge" due to its mismatch with the proof-theoretic framework designed for intuitionist logic. 
  For example, in Rumfitt's bilateral classical logic (BCL), the justification of coordination rules, notably RAA, is contentious.
  
  This paper employs the notion of validity, introduced by Prawitz, to provide a comprehensive justification for BCL, defining valid arguments and demonstrating its soundness. It resolves the circularity associated with RAA using fixed-point construction. Notably, this approach relies on impredicative comprehension but without on the excluded middle principle, suggesting that bivalence may not be essential for justifying classical logic.
\end{abstract}

\section{Introduction}

Proof-theoretic semantics is a formal approach to inferential semantics, where the meaning of a sentence is determined by how it is verified through formalized proofs. This approach was developed based on natural deduction proof systems by Gentzen and Prawits.  It seeks to address challenges in understanding how proofs contribute to the meaning of sentences, under the maxim of "meaning is use".

Gentzen's insight plays a crucial role in proof-theoretic semantics. He associates the meaning of sentences primarily with introduction (I) rules in the proof system, while elimination (E) rules are seen as consequences of these introductions. This idea introduces the concept of justification of proofs and helps in avoiding paradoxical operators like $\tonk$.

Two major approaches have been developed to formalize Gentzen's suggestion: the local and global approaches. In the local approach, the fundamental units of justification are rules, where I-rules are self-justifying, and E-rules must be shown to be in harmony with I-rules. Harmony is typically defined using various criteria, including the leveling of local peaks. In the global approach, the units of justification are arguments, which are proof figures with arbitrary steps. Canonical proofs ending with meaning-conferring rules are considered self-justifying, and non-canonical proofs can be transformed into canonical proofs through a normalization procedure. Valid arguments define the semantics, expressing the proof-theoretical justifiability of proofs.

In the context of classical logic, there is a challenge known as "Dummett's challenge," which involves providing a satisfactory proof-theoretic semantics for classical logic. This challenge arises because classical logic does not readily fit the proof-theoretic framework developed for intuitionist logic. Various approaches have been explored, Rumfitt's bilateral formulation of classical logic (BCL). 

BCL introduces positive and negative contexts and coordination rules using force operators. It defines operational and coordination rules for various logical constants, addressing the challenge of providing a proof-theoretic semantics for classical logic. However, the justification of coordination rules, such as RAA, has been a point of contention. Rumfitt's local approach does not provide a satisfactory proof-theoretic justification for these rules, and Rumfitt resorted to the notion of truth for this purpose~\cite{Rumfitt2008}.

This paper offers a justification for BCL using the global approach. It defines a set of valid arguments in BCL and demonstrates that BCL is sound under the such semantics.  It addresses the circular nature of RAA by introducing a technique based on fixed-point construction.

It is worth noting that our approach utilizes fixed-point construction, a tool often employed in the study of classical logic in computer science. While this technique may introduce impredicative comprehension, it is important to underscore that our approach does not rely on the excluded middle principle. As a result, our framework suggests that the principle of bivalence may not be an indispensable requirement for justifying classical logic. Furthermore, our semantics exemplifies how a meta-theory can provide a justification for an object theory without relying on a principle intrinsic to the object theory itself.

This paper is organized as follows.
Section~\ref{sec:related_works} discusses the related works.
Section~\ref{sec:bcl} introduces BCL, a formal system focused in this paper.
Section~\ref{sec:validity} defines the notion of validity, in the style of Prawits, to BCL.
Section~\ref{sec:justification} justifies all rules of BCL, including the law of contradiction and RAA.

\section{Related works}\label{sec:related_works}

Proof-theoretic semantics, including Prawits's notion of validity~\cite{Prawitz1971} is renowned for its applicability to intuitionistic logic while facing difficulties in validating classical laws.

Consequently, Dummett posits that proof-theoretic justifiability favors intuitionist logic over classical logic, a challenge aptly labeled as "Dummett's challenge" by K\"urbis~\cite{Kurbis2016-KRBSCO}.

In pursuit of resolving Dummett's challenge, several approaches have emerged. For instance, Rumfitt~\cite{Rumfitt2000} introduces force operators for assertion and denial, accompanied by coordination rules, the law of contradiction, and $\RAA$ (Reductio ad absurdum). Rumfitt characterizes coordination rules as structural rules, offering limited proof-theoretic justification by demonstrating sufficiency for the law of contradiction in atomic sentences. In contrast, this paper presents Prawitz's style validity~\cite{Prawitz1971} for BCL, justifying all rules in the proof-theoretic means, including coordination rules.

Our notion of validity relies on fixed-point construction, validated by Knaster-Tarski's theorem~\cite{Tarski1955}. Fixed-point construction has been instrumental in proving normalization results for formal systems influenced by classical logic~\cite{Barbanera1994,Yamagata2001,Yamagata2004a}. BCL can be perceived as an extension of these systems to encompass disjunction. Notably, our validity concept implies weak normalizability, establishing that BCL is weakly normalizable.

It is noteworthy that the proof of Knaster-Tarski's fixed point theorem necessitates impredicative comprehension, while it does not rely on the principle of excluded middle~\cite{Curi2015-ji}. This observation raises the intriguing possibility that while we do not present a constructive account of classical logic, we demonstrate that the principle of bivalence may not be an indispensable requirement for justifying classical logic.

\section{Mathematical Preliminary}
\begin{theorem}[Knaster-Tarkski's fixed point theorem]\label{thm:KT}
  Let $S$ be the set and $F \colon \PSet(S) \to \PSet(S) $.
  Further, we assume that $F$ is monotone, that is \[
    X \subseteq Y \implies F(X) \subseteq F(Y)
  \]
  for any $X, Y \subseteq S$.
  Then, $F$ has a least fixed point $X_0 = F(X_0)$
  and for any $Z, Z = F(Z)$, $X_0 \subseteq Z$.
\end{theorem}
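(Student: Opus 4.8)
The plan is to build the least fixed point by hand as the intersection of all $F$-closed sets, and then check the two clauses of the statement by short monotonicity arguments. Concretely, I would set
\[
  X_0 = \bigcap \{\, Y \subseteq S : F(Y) \subseteq Y \,\}.
\]
This is well defined: the collection on the right is nonempty since $F(S) \subseteq S$, and $X_0$ is a subset of $S$. It is worth flagging at this point that forming $X_0$ is exactly where impredicative comprehension enters — $X_0$ is picked out by a condition that quantifies over all subsets of $S$, $X_0$ itself among them — whereas no case split or appeal to excluded middle is needed.

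Next I would verify $X_0 = F(X_0)$ in two inclusions. For $F(X_0) \subseteq X_0$: for every $Y$ in the defining collection, $X_0 \subseteq Y$, so by monotonicity $F(X_0) \subseteq F(Y) \subseteq Y$; since this holds for all such $Y$, taking the intersection gives $F(X_0) \subseteq X_0$. For the reverse inclusion, apply monotonicity to $F(X_0) \subseteq X_0$ to get $F(F(X_0)) \subseteq F(X_0)$, so $F(X_0)$ is itself a member of the defining collection, hence $X_0 \subseteq F(X_0)$. Combining the two gives the fixed point. Finally, for leastness, if $Z = F(Z)$ then in particular $F(Z) \subseteq Z$, so $Z$ lies in the defining collection and therefore $X_0 \subseteq Z$; this simultaneously shows $X_0$ is a fixed point and that it is below every (pre)fixed point.

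The main obstacle is not any computation — the monotonicity manipulations above are each one or two lines — but rather the conceptual/foundational point the paper wants to make: justifying that $X_0$ may legitimately be formed, i.e. that the meta-theory admits the relevant impredicative power-set comprehension. I would therefore keep the argument explicitly constructive in the logical sense, avoiding bivalence and the law of excluded middle throughout, so that the only non-elementary ingredient is impredicativity, exactly as the surrounding discussion claims.
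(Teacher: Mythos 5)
Your proof is correct and follows essentially the same route as the paper: both define $X_0$ as the intersection of all prefixed points $\{Y : F(Y) \subseteq Y\}$, derive $F(X_0) \subseteq X_0$ by monotonicity, obtain the reverse inclusion by observing that $F(X_0)$ is itself a prefixed point, and get leastness because every fixed point is a prefixed point. Your added remarks on nonemptiness of the collection and on where impredicative comprehension (but not excluded middle) enters match the paper's own discussion following its proof.
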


\begin{proof}
  Let \[
  X_0 = \bigcap_{X \subseteq S} \{X \mid F(X) \subseteq X\}
  \]
  Let $\Phi = \{X \mid F(X) \subseteq X\}$.
  Assume that $F(X) \subseteq X$.
  Then, $X \in \Phi$ therefore $X_0 \subseteq X$.
  By monotonicity of $F$, $F(X_0) \subseteq F(X) \subseteq X$.
  \begin{equation}\label{eq:subseteq}
    F(X_0) \subseteq \bigcap \Phi = X_0
  \end{equation}
  Therefore, $F(F(X_0)) \subseteq F(X_0)$ by monotonicity of $F$.
  This implies $F(X_0) \in \Phi$.
  Therefore, $X_0 \subseteq F(X_0)$.
  Combining \eqref{eq:subseteq}, $X_0 = F(X_0)$.
  Because any fixed point $Y$ of $F$ is included in $\Phi$, $X_0$ is the least fixed point.
\end{proof}

The proof presented here uses impredicative comprehension, but does not use the principle of excluded middle.
In fact, Knaster-Tarski's theorem serves as a notable illustration where impredicative comprehension is necessary, yet it doesn't depend on the exclusion of the middle principle~\cite{Curi2015-ji}.

\section{Bilateral language and its formal systems}\label{sec:bcl}

In this section, we introduce the \emph{bilateral language} (abbreviated as BL) and the general notion of \emph{formal systems} for BL.

\emph{Bilateral language} is a language based on the idea that in classical logic, statements can have two linguistic forces, not only affirmation but also, denial.
The system is most famously proposed by Rumfitt \cite{Rumfitt2000} in the context of philosophy, but similar ideas appear in the other literature \cite{Parigot2000,Stewart2000} in the context of computer science.
In this paper, we call each representation of a linguistic act a \emph{statement}, and its content a \emph{proposition}.
Formally, we define
\begin{definition}[Proposition, Statement]
  Atomic propositions are denoted by symbols $p, q, p_1, \ldots$.
  Propositions $A, B, A_1, \ldots$ are defined by
  \begin{equation}
    A := p \mid A \wedge A \mid A \vee A \mid \neg A \mid A \to A.
  \end{equation}
  Statements $\alpha, \beta, \alpha_1, \ldots$ are defined by
  \begin{equation}
    \alpha := +A \mid -A.
  \end{equation}
  In addition, a special symbol $\bot$ appears in derivations.
  $\bot$ should be understood as a punctuation symbol, not a statement.
\end{definition}

\begin{definition}
  $+p$ and $-p$ for an atomic proposition $a$ are called \emph{atomic statements}.
  For a statement $\alpha$, its \emph{conjugate} $\alpha^*$ is defined as
  \begin{align}
    (+A)^* \equiv -A & & (-A)^* \equiv +A
  \end{align}
\end{definition}

We consider the following set BCL  of rules, which is divided into \emph{logical rules} $\LO$ and \emph{coordination rules} $\C$, as a ``natural" set of rules and show that all ``valid" rules can be reduced to them in some sense.
BCL is called \emph{bilateral classical logic}.
\begin{definition}[Logical rules]
  {\small
  \begin{align*}
    \begin{gathered}
      \infer[+\wedge I]{+A \wedge B}{
      +A &+ B
      }
    \end{gathered}
     & &
     \begin{gathered}
       \infer[+\wedge E1]{+A}{
         +A \wedge B
       }
     \end{gathered}
     & &
     \begin{gathered}
       \infer[+\wedge E2]{+B}{
         +A \wedge B
       }
     \end{gathered}
    \\
    \begin{gathered}
      \infer[-\wedge I1]{-A \wedge B}{
        -A
      }
    \end{gathered}
     & &
    \begin{gathered}
      \infer[-\wedge I2]{-A \wedge B}{
        -B
      }
    \end{gathered}
     & &
    \begin{gathered}
      \infer[-\wedge E]{\alpha}{
      -A \wedge B &
      \infer*{\alpha}{[-A]} &
      \infer*{\alpha}{[-B]}
      }
    \end{gathered}
    \\
    \begin{gathered}
      \infer[+\vee I1]{+A \vee B}{
        +A
      }
    \end{gathered}
     & &
    \begin{gathered}
     \infer[+\vee I2]{+A \vee B}{
       +B
     }
    \end{gathered}
     & &
    \begin{gathered}
      \infer[+\vee E]{\alpha}{
      +A \vee B &
      \infer*{\alpha}{[+A]} &
      \infer*{\alpha}{[+B]}
      }
    \end{gathered}
    \\
    \begin{gathered}
      \infer[-\vee I]{-A \vee B}{
      -A &- B
      }
    \end{gathered}
    & &
    \begin{gathered}
      \infer[-\vee E1]{-A}{
        -A \vee B
      }
    \end{gathered}
     & &
    \begin{gathered}
      \infer[-\vee E2]{-B}{
        -A \vee B
      }
    \end{gathered}
    \\
    \begin{gathered}
      \infer[+\neg I]{+ \neg A}{
        -A
      }
    \end{gathered}
     & &
    \begin{gathered}
     \infer[+\neg E]{- A}{
       +\neg A
     }
    \end{gathered}
    \\
    \begin{gathered}
      \infer[-\neg I]{- \neg A}{
        +A
      }
    \end{gathered}
    & &
    \begin{gathered}
      \infer[-\neg E]{+ A}{
        -\neg A
      }
    \end{gathered}
    \\
    \begin{gathered}
      \infer[+\to I]{+A \to B}{
        \infer*{+B}{[+A]}
      }
    \end{gathered}
     & &
   \begin{gathered}
     \infer[+ \to E]{+B}{
       +A \to B & +A
     }
   \end{gathered}
    \\
   \begin{gathered}
     \infer[- \to I]{- A \to B}{
       +A & -B
     }
   \end{gathered}
    & &
   \begin{gathered}
     \infer[- \to E1]{+A}{
       - A \to B
     }
   \end{gathered}
    & &
    \begin{gathered}
      \infer[- \to E1]{-B}{
        - A \to B
      }
    \end{gathered}
  \end{align*}
  }
\end{definition}

\begin{definition}[Coordination rules]
  \begin{align*}
    \begin{gathered}
      \infer[\bot]{\bot}{
        +A & -A
      }
    \end{gathered}
    & &
    \begin{gathered}
      \infer[\RAA]{\alpha^*}{
        \infer*{\bot}{[\alpha]}
      }
    \end{gathered}
  \end{align*}
\end{definition}
In the rules $+ \to I$ and RAA, the assumption $\alpha$ is discharged and no longer open.
If $\D$ has no open assumption, we say that $\D$ is closed.
Otherwise, we say that $\D$ is open.

If $\alpha$ can be derived from open assumptions $\Gamma \equiv \beta_1, \ldots, \beta_n$ by a derivation $\D$, we write $\D : \Gamma \vdash \alpha$.
If $\D : \Gamma \vdash \alpha$ for some $\D$, we write $\Gamma \vdash \alpha$.

We define \emph{reduction rules} on $\D$.
We call the set of these reduction $R$.

%
% ---- Bibliography ----
%
% BibTeX users should specify bibliography style 'splncs04'.
% References will then be sorted and formatted in the correct style.
%
% \bibliographystyle{splncs04}
% \bibliography{mybibliography}
%
{\scriptsize
\begin{align*}
  \begin{gathered}
    \infer{\bot}{
      \infer{\alpha^*}{
        \infer*{\bot}{[\alpha]}
      } &
      \infer*{\alpha}{}
    }
  \end{gathered} & \ \implies \
  \begin{gathered}
    \infer*{\bot}{
      \infer*{\alpha}{}
    }
  \end{gathered} &
  \begin{gathered}
    \infer{+A}{
      \infer{- A \to B}{
        +A & - B
      }
    }
  \end{gathered} & \ \implies \
  \begin{gathered}
      +A
  \end{gathered} &
  \begin{gathered}
      \infer{-B}{
        \infer{- A \to B}{
          +A & - B
        }
      }
  \end{gathered} & \ \implies \
  \begin{gathered}
      -B
  \end{gathered}
\end{align*}
\begin{align*}
    \begin{gathered}
      \infer{-A}{
        \infer{- A \vee B}{
          -A & - B
        }
      }
    \end{gathered} & \ \implies \
    \begin{gathered}
        -A
    \end{gathered}
    &
    \begin{gathered}
      \infer{\alpha}{
        \infer{-A \land B}{-A} &
        \infer*{\alpha}{[-A]} &
        \infer*{\alpha}{[-B]}
      }        
    \end{gathered} & \ \implies
    \begin{gathered}
      \infer*{\alpha}{[-A]}
    \end{gathered}
    &
    \begin{gathered}
      \infer{+A}{
        \infer{-\neg A}{+A}
      }
    \end{gathered} & \ \implies \
    \begin{gathered}
      +A
    \end{gathered}
    \end{align*}
  \begin{align*}
    \begin{gathered}
      \infer{\bot}{
        \infer{+A \wedge B}{
          +A & +B
        } &
        \infer{-A \wedge B}{
          -A
        }
      }
    \end{gathered} & \ \implies \
    \begin{gathered}
      \infer{\bot}{
        +A & -A
      }
    \end{gathered} &
    \begin{gathered}
      \infer{\bot}{
        \infer{+A \to B}{
          \infer*{+B}{[+A]}
        } &
        \infer{-A \to B}{
          \infer*{+A}{} &
          -B
        }
      }
    \end{gathered} & \ \implies \
    \begin{gathered}
      \infer{\bot}{
        \infer*{+B}{\infer*{+A}{}} &
        -B
      }
    \end{gathered}   
  \end{align*}
  \begin{align*}
    \begin{gathered}
      \infer{+A}{
        \infer{+A \wedge B}{
          \infer*{\bot}{
            [-A \wedge B]
          }
        }
      }
    \end{gathered} & \ \implies \
    \begin{gathered}
      \infer{+A}{
        \infer*{\bot}{
          \infer{-A \wedge B}{[-A]}
        }
      }
    \end{gathered} &
    \begin{gathered}
      \infer{-A}{
        \infer{-A \vee B}{
          \infer*{\bot}{
            [+A \vee B]
          }
        }
      }
    \end{gathered} & \ \implies \
    \begin{gathered}
      \infer{-A}{
        \infer*{\bot}{
          \infer{+A \vee B}{[+A]}
        }
      }
    \end{gathered} &
    \begin{gathered}
      \infer{+A}{
        \infer{-\neg A}{
          \infer*{\bot}{[+\neg A]}
        }
      }
    \end{gathered} & \ \implies \
    \begin{gathered}
      \infer{+A}{
        \infer*{\bot}{
          \infer{+\neg A}{
            [-A]
          }
        }
      }
    \end{gathered}
  \end{align*}
  \begin{align*}
    \begin{gathered}
      \infer{\alpha}{
        \infer{-A \wedge B}{
          \infer*{\bot}{[+A \wedge B]}} &
        \infer*{\alpha}{[-A]} &
        \infer*{\alpha}{[-B]}
        }
    \end{gathered} & \Rightarrow
    \begin{gathered}
      \infer{\alpha}{
      \infer*{\bot}{
        \infer{+A \wedge B}{
          \infer{+A}{
            \infer{\bot}{
              \infer*{\alpha}{[-A]}
              [\alpha^*]
            }
          } &
          \infer{+B}{
          \infer{\bot}{
            \infer*{\alpha}{[-B]}
            [\alpha^*]
          }
          }
        }
      }
      }
    \end{gathered} &
    \begin{gathered}
      \infer{\alpha}{
        \infer{+A \vee B}{
          \infer*{\bot}{[-A \vee B]}} &
        \infer*{\alpha}{[+A]} &
        \infer*{\alpha}{[+B]}
        }
    \end{gathered} &  \Rightarrow
    \begin{gathered}
      \infer{\alpha}{
      \infer*{\bot}{
        \infer{-A \vee B}{
          \infer{-A}{
            \infer{\bot}{
              \infer*{\alpha}{[+A]}
              [\alpha^*]
            }
          } &
          \infer{-B}{
          \infer{\bot}{
            \infer*{\alpha}{[+B]}
            [\alpha^*]
          }
          }
        }
      }
      }
    \end{gathered}
  \end{align*}
  \begin{align*}
    \begin{gathered}
      \infer{+B}{
        \infer{+ A \to B}{
          \infer*{\bot}{[-A \to B]}
        } &
      \infer*{+A}{}
      }
    \end{gathered}
    & \ \implies \
    \begin{gathered}
      \infer{+B}{
        \infer*{\bot}{
          \infer{-A \to B}{
            \infer*{+A}{} &
            [-B]
          }
        }
      }
    \end{gathered} &
    \begin{gathered}
      \infer{+A}{
        \infer{-A \to B}{
          \infer*{\bot}{[+A \to B]}
        }
      }
    \end{gathered}& \ \implies \
    \begin{gathered}
      \infer{+A}{
        \infer*{\bot}{
          \infer{+A \to B}{
            \infer{+B}{
              \infer{\bot}{
                [+A] & [-A]
              }
            }
          }
        }
      }
    \end{gathered} &
    \begin{gathered}
      \infer{-B}{
        \infer{-A \to B}{
          \infer*{\bot}{[+A \to B]}
        }
      }
    \end{gathered}& \ \implies \
    \begin{gathered}
      \infer{-B}{
        \infer*{\bot}{
          \infer{+A \to B}{
            [+B]
          }
        }
      }
    \end{gathered}
  \end{align*}
}  

\section{Validity}\label{sec:validity}

\begin{definition}[Basic system]
  A basic system is a given construction of atomic formulas and its refutations, as $\mathbf{S}$ in~\cite{Prawitz1971}.
  If all statements in an inference rule are atomic, the rule is called \emph{atomic}.
  If a rule does not have a premise, the rule is called an \emph{axiom}.
  A basic system $\B$ is a set of the atomic rules in the form
  \begin{equation*}
    \infer{+a}{
      +a_1 & +a_2 & \ldots & +a_n
    }
  \end{equation*}
  including the case $n=0$ and axioms $-b_1, \ldots, b_m$.
  We keep rules with premises only for assertions $+p$, because we want to keep $\B$ essentially intuitionistic.
  By abusing notation, $\B$ also denotes the minimal set of judgements closed under $\B$ and $\C$.

  Let $a \lesssim b$ if $+b$ appears as an assumption and $+a$ as a conclusion in $\B$.
  To avoid circular reasoning, we assume that there is no infinitely descendant sequence $\ldots \lesssim a_n \lesssim a_{n-1} \lesssim \ldots \lesssim a_1$.
  \emph{height} $h(a)$ for $a \in \B$ is the ordinal defined by
  \begin{itemize}
    \item $h(a) = 0$ if $\vdash_{\B} +a$.
    \item $h(b) = 0$ if $\vdash_{\B} -b$.
    \item $h(a) = \inf(\{ h(b) \mid +a_1, \ldots, +a_n, +b \vdash_{\B} +a \}) + 1$.
  \end{itemize}
  Because the relation $\lesssim$ is well-founded, $h(a)$ is defined for all $a \in \B$.
\end{definition}

We proceed the definition of validity.
We define closed valid derivations of statements $\alpha$ by induction on $\alpha$.

As Prawitz definition~\cite{Prawitz1971}, an open derivation $\D : \beta_1, \ldots,\beta_n \vdash \alpha$ is valid if and only if, when any closed valid derivation $\D_i: \vdash \beta_i$ is substituted to each $\beta_i$, the derivation obtained $\D[\D_1/\beta_1, \ldots, \D_n/\beta_n]: \vdash \alpha$ is valid.
Here, $\D[\D_1/\beta_1, \ldots, \D_n/\beta_n]$ means the derivation obtained by substituting  $\D_i: \vdash \beta_i$ to every $\beta_i$ in $\D$.

\subsection{$\bot$}
A derivation $\D: \vdash \bot$ is valid if it is normalizable to a derivation $\D_1$, in which all rules are either in $\B$ or $\C$.

\subsection{Atomic statements}

A closed derivation $\D: \vdash \alpha$ for an atomic statement $\alpha$ is valid if it can be reduced to either
\begin{enumerate}
  \item $\D$ is normalizable to a derivation in the form \[
    \infer{\alpha}{
      \infer*[\D_1]{\beta_1}{} & \ldots &
      \infer*[\D_n]{\beta_n}{}
    }
  \]
  where all $\D_1, \ldots, \D_n$ are valid.
  The definition above is well-defined by transfinite-induction on $h(\alpha)$.
  \item $\D$ can be reduced to a form \[
    \infer{\alpha}{
      \infer*[\D_1]{\bot}{
        [\alpha^*]
      }
    }
  \]
  and $\D_1$ is valid.
\end{enumerate}
The definition looks circular, but we can resolve circularity by Knaster-Tarkski's fixed point theorem~\cite{Tarski1955}.
Let $S$ be a set of valid derivation $\D: \vdash \alpha$.
Let $S^*$ be the set of normalizable derivations in the form
\[\infer{\alpha^*}{
  \infer*[\D_1]{\bot}{
    [\alpha]
  }
}
\]
in which $\D_1[\D/\alpha]: \vdash \bot$ can be reduced to a valid derivation for any valid derivation $\D$ of $\alpha$.
If $S \subseteq S'$, $S^* \supseteq S'^*$.

Consider $F_\alpha(S) = R_{\alpha} \cup S^*$ where $R_{\alpha}$ is the derivation normalizable to a derivation of $\alpha$ only using rules in $\B$ and $S$ is the set of valid derivation of $\alpha^*$.
If $S$ is the set of valid derivation of $-p$, $F_{+p}(S)$ is the set of valid derivation of $+p$.
Similarly, If $S$ is the set of valid derivation of $+p$, $F_{-p}(S)$ is the set of valid derivation of $-p$.

Because the operator $F_{+p}(F_{-p}(S))$ on $S$ is monotone, by Knaster-Tarkski's fixed point theorem, it has a unique least fixed point $X = F_{+p}(F_{-p}(X))$.
We define the element of $X$ as a valid derivation of $+p$ and $F_{-p}(X)$ as a valid derivation of $-p$.
Note that the choosing $X$ as the least fixed point, not largest one, is arbitrary choice.

\subsection{Logical connective}

A closed derivation $\D^+: \vdash +A \land B$ for a statement $+A \land B$ is valid if it can be reduced to either
\begin{enumerate}
  \item $\D^+$ is normalizable to a derivation in the form \[
    \infer{+A \land B}{
      \infer*[\D^+_1]{+A}{} &
      \infer*[\D^+_2]{+B}{}
    }
  \] in which $\D^+_1$ and $\D^+_2$ are valid.
  \item $\D^+$ is normalizable a form \[
    \infer{+A\land B}{
      \infer*[\D^+_*]{\bot}{
        [-A \land B]
      }
    }
  \]
  and $\D^+_*$ is valid.
\end{enumerate}
and a closed derivation $\D^-: \vdash -A \land B$ for a statement $-A \land B$ is valid if it can be reduced to either
\begin{enumerate}
  \item $\D^-$ is normalizable to a derivation in the form \[
    \infer{-A \land B}{
      \infer*[\D^-_1]{-A}{}
    }
  \] in which $\D^-_1$ is valid.
  \item $\D^-$ is normalizable to a derivation in the form \[
    \infer{-A \land B}{
      \infer*[\D^-_2]{-B}{}
    }
  \] in $\D^-_2$ is valid.
  \item $\D$ is normalizable a form \[
    \infer{-A\land B}{
      \infer*[\D^-_*]{\bot}{
        [+A \land B]
      }
    }
  \]
  and $\D^-_*$ is valid.
\end{enumerate}
Because validity of $\D^+_*$ depends on the notion of valid derivations of $- A \land B$, and validity of $\D^-_*$ depends on the notion of valid derivations of $+A \land B$, the definition above is circular.
We brake the circular by the fixed point construction as in the case of atomic formulas.

The notion of validity for $+A \lor B$ and $-A \lor B$ is symmetric to the case of $A \land B$.

The notion for $+A \to B$ and $- A \to B$ is defined as follows.
A closed derivation $\D^+: \vdash +A \to B$ for a statement $+A \to B$ is valid if it can be reduced to either
\begin{enumerate}
  \item $\D^+$ is normalizable to a derivation in the form \[
    \infer{+A \to B}{
      \infer*[\D^+_{\to}]{+A}{+B}
    }
  \] in which $\D^+_{\to}$ is valid.
  \item $\D^+$ is normalizable a form \[
    \infer{+A\to B}{
      \infer*[\D^+_*]{\bot}{
        [-A \to B]
      }
    }
  \]
  and $\D^-_*$ is valid.
\end{enumerate}
and a closed derivation $\D^-: \vdash -A \to B$ for a statement $-A \to B$ is valid if it can be reduced to either
\begin{enumerate}
  \item $\D^-$ is normalizable to a derivation in the form \[
    \infer{-A \to B}{
      \infer*[\D^+_1]{+A}{}&
      \infer*[\D^-_2]{-B}{}&
    }
  \] in which $\D^+_1$ and $\D^-_2$ are valid.
 \item $\D$ is normalizable a form \[
    \infer{-A\to B}{
      \infer*[\D^-_*]{\bot}{
        [+A \to B]
      }
    }
  \]
  and $\D^-_1$ is valid.
\end{enumerate}
Again, we resolve the circularity using fixed point construction.

\section{All derivations are valid}\label{sec:justification}

This section proves all derivation $\D$ of BCL can be reduced to valid derivations.
We prove more general theorem.
\begin{theorem}
  For any rule $R$ in $\BL$, if $\D_1, \ldots, \D_n$ are valid,
  \[
  \infer[R]{\alpha}{
    \infer*[\D_1]{\beta_1}{} & \ldots
    \infer*[\D_n]{\beta_n}{} 
    }
  \]
  is valid.
\end{theorem}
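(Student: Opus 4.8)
The plan is to prove the theorem by induction on the rule $R$, treating the logical rules and the coordination rules separately, and inside each case by appealing to the structure of the definition of validity for the conclusion statement $\alpha$. For a logical introduction rule (e.g. $+\wedge I$, $-\vee I$, $+\to I$, $+\neg I$, etc.), the reduct of $\infer[R]{\alpha}{\D_1 \cdots \D_n}$ is already in the canonical shape demanded by clause (1) of the definition of validity for $\alpha$, with the immediate subderivations being exactly the $\D_i$ (or subderivations built from them, as for $+\to I$ where one uses the open-derivation clause), so validity is immediate from the hypothesis that the $\D_i$ are valid. For a logical elimination rule, the argument is the standard Prawitz-style one: since the major premise derivation is valid, it reduces either to a canonical derivation — in which case one fires the corresponding reduction rule from $R$ and lands on a derivation built from valid components, so we conclude by the induction hypothesis on subderivations of smaller complexity — or it reduces to the $\RAA$-shaped derivation of clause (2) (the $[\alpha^*]$ case), in which case one uses the permutation reduction listed in $R$ that commutes the elimination past the $\RAA$, again reducing the problem to validity of a structurally simpler derivation. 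This requires a subsidiary well-founded measure (complexity of the cut formula together with the height $h$ for atoms) to see that the appeals to validity in the reduced derivation are legitimate; I would make that measure explicit at the start.

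For the coordination rules the work is genuinely different. For the $\bot$ rule, $\infer[\bot]{\bot}{+A & -A}$ with $\D_1 : \vdash +A$ and $\D_2 : \vdash -A$ both valid: one normalizes $\D_1$ and $\D_2$; if either reduces to a canonical (non-$\RAA$) form one must check the corresponding $\bot$-reduction decreases the measure, and if both are in $\RAA$ form one uses the first reduction in $R$ (the $\RAA$/$\bot$ cut) — ultimately one shows the whole thing normalizes to a derivation all of whose rules lie in $\B \cup \C$, which is exactly the definition of a valid derivation of $\bot$. For $\RAA$ itself, $\infer[\RAA]{\alpha^*}{\infer*[\D]{\bot}{[\alpha]}}$: here the reduct is precisely the clause-(2) shape in the definition of validity for $\alpha^*$, so validity of the conclusion amounts to the statement that for every valid closed derivation $\D'$ of $\alpha$, the substituted derivation $\D[\D'/\alpha] : \vdash \bot$ reduces to a valid derivation of $\bot$ — and this is exactly what membership of $\D$ in the set $S^*$ (resp. in $F_{\alpha^*}$ applied to the valid derivations of $\alpha$) guarantees once we know $\D$ is valid as an open derivation. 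The point is that the fixed-point $X$ was defined so that its elements, together with $F_{-p}(X)$ etc., are closed under exactly these operations, so the $\RAA$ case is where the fixed-point construction of Section~\ref{sec:validity} is cashed in.

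The main obstacle I anticipate is termination/well-foundedness of the reduction bookkeeping: the reduction rules in $R$ — especially the permutative conversions that push an elimination through an $\RAA$, and the $\wedge$/$\vee$ conversions that duplicate subderivations and introduce fresh $[\alpha^*]$ assumptions — do not obviously decrease any naive size measure, so I need a carefully chosen ordinal measure (lexicographic in the complexity of the principal formula of the connective being eliminated, then in the $\lesssim$-height for atomic principal formulas, then in some auxiliary rank of the derivation) for which every reduction in $R$ strictly descends, and I must check that the ``valid'' subderivations produced by firing a reduction really do satisfy the inductive hypothesis at that lower measure. A secondary delicate point is handling open derivations uniformly: the theorem as stated lets the $\D_i$ be valid closed derivations, but rules like $+\to I$ and $\RAA$ discharge assumptions, so I will first reduce the general (open) case to the closed case by the substitution clause in the definition of validity, and then run the induction; making that reduction precise, and making sure the substitution commutes with the reductions used, is where most of the routine-but-fiddly care will go.
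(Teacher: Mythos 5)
Your proposal follows essentially the same route as the paper: introduction rules are immediate from clause (1) of the definition of validity, eliminations are handled by firing the reduction when the major premise is canonical or the permutative conversion when it reduces to the $\RAA$ form, the $\bot$-rule is handled by case analysis on whether either premise normalizes to an $\RAA$-ending form followed by induction on the cut formula, and $\RAA$ itself is exactly the clause-(2)/fixed-point case where $X^*(\alpha^*) \subseteq X(\alpha)$ is cashed in. The termination measure and open-derivation bookkeeping you flag as delicate are genuine concerns, but the paper leaves them equally implicit, so your attempt matches its proof in both strategy and level of detail.
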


\subsection{RAA}

Let $X(\alpha)$ be the set of closed valid derivations of $\alpha$ and $X(\alpha^*)$ the set of closed valid derivations of $\alpha^*$.
By
 \[
  X^*(\alpha^*) \subseteq X(\alpha)
\]
If $\D$ has a form
\begin{align*}
  \infer{\alpha}{
    \infer*[\D_1]{\bot}{[\alpha^*]}
  }
\end{align*}
and $\D_1$ is valid, $\D \in X^*(\alpha^*) \subseteq X(\alpha)$.

\subsection{$\bot$-rule}

Assume that $\D$ ends with $\bot$-rule.
\[
  \infer{\bot}{
    \infer*[\D_1]{\alpha}{} &
    \infer*[\D_2]{\alpha^*}{}
  }
\]

By induction hypothesis, $\D_1$ and $\D_2$ are valid.
By the definition of validity, $\D_1$ is normalizable to a form in which the last rule is either $\B$-rule, introduction rule of a logical connective or $\RAA$.
The same holds for $\D_2$.

Assume either $\D_1$ or $\D_2$, say $\D_1$, is normalizable to a form ending $\RAA$.
Then $\D$ is normalizable to a form
\[
  \infer{\bot}{
    \infer{\alpha}{
      \infer*[\D^*_1]{\bot}{[\alpha^*]}
    } &
    \infer*[\D_2]{\alpha^*}{}
  }
\]
in which $\D^*_1$ is valid, because $\D$ is valid.
By reducing $\bot$-$\RAA$ pair, we get 
\[
  \infer*[\D^*_1]{\bot}{
    \infer*[\D_2]{\alpha^*}{}
  }
\]
Because $\D_1^*$ is valid, the derivation above is normalizable to a valid derivation.
By definition of validity for derivations of $\bot$, $\D$ is valid.

If neither $\D_1$ nor $\D_2$ is normalizable to a form ending $\RAA$, we prove the theorem by induction on $\alpha$.

\subsubsection{The $\alpha$ is atomic}

\[
  \infer{\bot}{
    \infer{+p}{
      \infer*[\D_1]{+q_1}{} &
      \ldots
      \infer*[\D_n]{+q_n}{}
    } &
    \infer{-p}{}
  }
\]
By the induction hypotheses, $\D$ is valid.

\subsubsection{Logical rules}
We show the case that $\alpha = +A \to B, \alpha^* = -A \to B$.
The proofs of other cases are similar.

\[
  \infer{\bot}{
    \infer{+A \to B}{
      \infer*[\D_1]{+B}{[+A]}
    } &
    \infer{-A \to B}{
      \infer*[\D_2]{+A}{} &
      \infer*[\D_3]{-B}{}
    }
  }
\]
We reduce it to
\[
  \infer{\bot}{
    \infer*[\D_1]{+B}{\infer*[\D_2]{+A}{}}
    \infer*[\D_3]{-B}{}
  }
\]
Because $\D_1, \D_2$ are valid, $\D_1[\D_2/+A]$ is also valid.
By induction hypothesis on $B$, the theorem holds.

\subsection{Atomic rules and logical rules}
If $R$ is an atomic rule or an introduction rule, the theorem is trivial from the definition of validity.
Therefore we consider the case in which $R$ is an elimination rule.
In particular, we consider $E\lor$-rule.
Consider the case that $\D$ is normalized to the following form.
\[
  \infer{\alpha}{
    \infer{+A \vee B}{
      \infer*[\D_1]{\bot}{[-A \vee B]}}
       &
      \infer*[\D_2]{\alpha}{[+A]} &
      \infer*[\D_3]{\alpha}{[+B]}
    }
\]
Because $\D$ is valid, we can assume that $\D_1, \D_2, \D_3$ are valid.
Because $\D_2, \D_3$ are valid and an introduction rule preserves validity,
\[
  \infer{-A \vee B}{
    \infer{-A}{
      \infer{\bot}{
        \infer*[\D_2]{\alpha}{[+A]}
        [\alpha^*]
      }
    } &
    \infer{-B}{
    \infer{\bot}{
      \infer*[\D_3]{\alpha}{[+B]}
      [\alpha^*]
    }
    }
  }
\]
is valid.
Combining the fact that $\D_1$ is valid, the theorem holds.

\section{Conclusions and philosophical remarks}

In conclusion, this paper has introduced a novel notion of validity in the style of Prawitz, specifically tailored to Rumfitt's bilateral classical logic (BCL). Through this framework, we have demonstrated that all rules, including the principle of contradiction and reductio ad absurdum (RAA), can be justified.

A significant challenge encountered in defining validity was the apparent circularity arising from RAA. This challenge was resolved through the application of fixed-point construction, facilitated by Knaster-Tarski's fixed point theorem.

It is worth noting that Knaster-Tarski's fixed point theorem relies on impredicative comprehension. Although there have been efforts to establish it within a constructive context~\cite{Curi2015-ji}, the general form remains unproven through constructive means. Consequently, our approach does not provide a constructive account of classical logic.

However, this does not diminish the significance of our work. Knaster-Tarski's theorem, while impredicative, does not hinge on the principle of excluded middle. This observation suggests that the principle of bivalence may not be an essential prerequisite for justifying classical logic. Moreover, our results underscore the potential for mathematical principles, rather than logical ones, to serve as a foundation for justifying logical principles.

% \section*{Acknowledgment}

% We would like to acknowledge the collaboration with Ukyo Suzuki~\cite{SuzukiManuscript-SUZOTN} as the foundation for this work. This collaboration played a significant role in shaping the paper. We extended the original work by allowing base systems to incorporate rules, not just axioms, thereby enhancing its scope. Additionally, we undertook substantial revisions and provided clarification for the philosophical aspects of the paper.

% It is important to note that while many aspects of this paper are indebted to Ukyo Suzuki's contributions, the decision was made for the authorship to be assumed by the present author. This decision was prompted by the unavailability of Suzuki Ukyo for further comments and input on the work.

\bibliographystyle{plain} % We choose the "plain" reference style
\bibliography{prawitz-style-validity-for-bcl.bib} % Entries are in the refs.bib file
\end{document}